\newcommand{\al}{\alpha}
\newcommand{\be}{\beta}
\newcommand{\fr}{\mathcal{F}}
\newcommand{\ity}{\infty}
\newcommand{\C}{\mathbb{C}}
\numberwithin{equation}{section}
\newtheorem{theorem}{Theorem}[section]
\newtheorem{lemma}[theorem]{Lemma}
\theoremstyle{remark}
\begin{document}

\title[On a Theorem of  Schwick ]{On a Theorem of  Schwick }

\thanks{The research work of the first author is supported by research fellowship from UGC India.}

\author[G. Datt]{Gopal Datt}
\address{Department of Mathematics, University of Delhi,
Delhi--110 007, India} \email{ggopal.datt@gmail.com }

\author[S. Kumar]{Sanjay Kumar}

\address{Department of Mathematics, Deen Dayal Upadhyaya College, University of Delhi,
Delhi--110 015, India }
\email{sanjpant@gmail.com}

\begin{abstract}
Let $\mathcal D $ be a domain, $n, k$ be positive integers and $n\geq k+3$. Let $\mathcal F$ be a family of functions meromorphic  in $\mathcal D$.  If each $f\in \mathcal F$ satisfies $(f^n)^{(k)}(z)\neq 1$ for $z\in \mathcal D$, then $\mathcal F$ is a normal family. This  result was proved by Schwick ~\cite{Sch1}, in this paper we extend this theorem.

\end{abstract}

\keywords{Meromorphic functions, Holomorphic functions,  Shared values, Normal families.}

\subjclass[2010]{30D45}

 \maketitle

\section{Introduction and main results}
We denote the  complex plane by $\C$, and the unit disk $\{z\in \C: \ |z|<1 \}$ by $\Delta$.
In 1989, Schwick ~\cite{Sch1} proved a normality criterion which states that: {\it{For positive integers $k$,  $n\geq k+3$, let $\mathcal F$ be a family of functions meromorphic  in $\mathcal D$.  If each $f\in \mathcal F$ satisfies $(f^n)^{(k)}(z)\neq 1$ for $z\in \mathcal D$, then $\mathcal F$ is a normal family.}} This result holds good for holomorphic functions with the case $n\geq k+1$. The following theorem is a result of Wang and Fang ~\cite{wang}. The proof was omitted in that article, here we give a proof of this result and extend this theorem.
\begin{theorem}\label{thm}
Let $n, k$ be positive integers and $n\geq k+1$ and $\mathcal D $ be a domain in $\C$. Let $\mathcal F$ be a family of functions meromorphic  on $\mathcal D$.  If each $f\in \mathcal F$ satisfies $(f^n)^{(k)}(z)\neq 1$ for $z\in \mathcal D$, then $\mathcal F$ is a normal family.\\
\end{theorem}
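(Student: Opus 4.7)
The plan is to argue by contradiction, using the Pang--Zalcman rescaling lemma to produce a limit function on $\mathbb{C}$ and then invoking a sharp Picard/Hayman-type theorem for expressions of the form $(g^n)^{(k)}$.

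First I would suppose $\mathcal{F}$ is not normal at some $z_0\in\mathcal{D}$. The condition $(f^n)^{(k)}\neq 1$ is invariant under the affine rescaling $f(z)\mapsto\rho^{-k/n}f(z_0+\rho z)$, and the exponent $\alpha:=k/n$ lies in $(0,1)$ since $n\ge k+1$. Applying the Pang--Zalcman extension of Zalcman's lemma with this $\alpha$, I extract $f_j\in\mathcal{F}$, $z_j\to z_0$ and $\rho_j\to 0^{+}$ such that
\[
g_j(\zeta)\,:=\,\rho_j^{-k/n}\,f_j(z_j+\rho_j\zeta)
\]
converges locally uniformly in the spherical metric on $\mathbb{C}$ to a non-constant meromorphic function $g$ of finite order.

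Next, a direct chain-rule computation gives $(g_j^n)^{(k)}(\zeta)=(f_j^n)^{(k)}(z_j+\rho_j\zeta)$, so $(g_j^n)^{(k)}-1$ never vanishes where defined. By Hurwitz's theorem, either $(g^n)^{(k)}\equiv 1$ on $\mathbb{C}$ or $(g^n)^{(k)}(\zeta)\neq 1$ for every $\zeta\in\mathbb{C}$. The identity case is ruled out immediately: it would make $g^n$ a polynomial of exact degree $k$, forcing $g$ to be entire (no pole of $g$ could survive in a polynomial $g^n$) and to satisfy $n\deg(g)=k$, which is incompatible with $n\ge k+1$ for non-constant $g$.

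Hence I will be left with $(g^n)^{(k)}\neq 1$ throughout $\mathbb{C}$, and the contradiction must come from the Picard-type assertion that for every non-constant meromorphic $g$ on $\mathbb{C}$ with $n\ge k+1$, the function $(g^n)^{(k)}$ attains the value $1$. For transcendental $g$ I plan to apply Nevanlinna theory together with a Clunie-type lemma to bound $T(r,g)$ by the counting function of zeros of $(g^n)^{(k)}-1$ plus a small-function error, forcing such zeros to exist; for rational $g$ I would compare multiplicities of zeros and poles of $g^n$ against those of $(g^n)^{(k)}-1$ and exploit $n\ge k+1$ directly through a degree count. This final Picard step is the main obstacle, because $n\ge k+1$ is two units sharper than Schwick's $n\ge k+3$, so the meromorphic Zalcman limits with multiple poles require delicate bookkeeping; this is precisely where the improvement embodied in the theorem is earned.
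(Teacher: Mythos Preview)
Your proposal follows the same route as the paper: Pang--Zalcman rescaling with exponent $-k/n$, the Hurwitz dichotomy on $(g^n)^{(k)}-1$, elimination of the identity case, and a Picard-type contradiction split into rational and transcendental subcases. Your polynomial-degree argument for the identity case is slightly cleaner than the paper's (which invokes the bounded spherical derivative to write $g=e^{c\zeta+d}$), and the Picard step you leave as a plan is exactly what the paper supplies in two preparatory lemmas---a degree count for rational $g$ and a Nevanlinna argument for transcendental $g$ of finite order.
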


It is natural to ask what can happen if we have a solution of $(f^n)^{(k)}-1$. For this question we can extend Theorem \ref{thm} for the case $k\geq 1$ in the following manner.

\begin{theorem}\label{thm1}
Let $n, k$ be positive integers and $n\geq k+2$ and $\mathcal D $ be a domain in $\C$. Let $\mathcal F$ be a family of functions meromorphic  on $\mathcal D$.  If for each function $f\in \fr, \ (f^n)^{(k)}(z)- 1$ has at most one zero ignoring multiplicity $(IM )$ in  $\mathcal D$, then $\mathcal F$ is a normal family.\\

\end{theorem}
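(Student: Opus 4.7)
The plan is a standard Pang--Zalcman rescaling argument concluded by a Picard-type statement sharp enough to accommodate one exceptional zero; the hypothesis $n\ge k+2$ enters decisively in the final step.

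First I would assume, for contradiction, that $\fr$ is not normal at some $z_0\in\mathcal D$ and apply the Pang--Zalcman rescaling lemma with exponent $\alpha:=k/n\in(0,1)$, which is admissible since $n\ge k+2>k$. This produces sequences $f_j\in\fr$, $z_j\to z_0$, and $\rho_j\to 0^+$ such that
\[
g_j(\zeta):=\rho_j^{-k/n}\,f_j(z_j+\rho_j\zeta)\longrightarrow g(\zeta)
\]
locally uniformly in the spherical metric, where $g$ is a nonconstant meromorphic function on $\C$ of finite order. The choice of $\alpha=k/n$ yields the identity $(g_j^n)^{(k)}(\zeta)=(f_j^n)^{(k)}(z_j+\rho_j\zeta)$, so on every compact subset of $\C$ each $(g_j^n)^{(k)}-1$ inherits the ``at most one zero'' hypothesis for large $j$. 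Because two distinct zeros in the limit would, by Hurwitz's theorem, force two distinct zeros in $(g_j^n)^{(k)}-1$ for large $j$, we arrive at the dichotomy: either (a) $(g^n)^{(k)}\equiv 1$ on $\C$, or (b) $(g^n)^{(k)}-1$ has at most one zero IM on all of $\C$.

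Case (a) is settled by a short calculation. Any pole of $g$ would persist in $(g^n)^{(k)}$, hence $g$ is entire and $g^n$ is a polynomial of degree exactly $k$. Entirety of $g$ forces $g^n=Q^n$ for some polynomial $Q$, so $n\deg Q=k$; since $n>k$ this compels $\deg Q=0$, making $g$ constant and contradicting $\deg g^n=k\ge 1$.

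Case (b) is the crux: one needs the Picard-type statement that no nonconstant meromorphic $g$ on $\C$ with $n\ge k+2$ can have $(g^n)^{(k)}-1$ with at most one zero IM. For transcendental $g$, I would invoke the value-distribution theorem that underlies Theorem \ref{thm}: for $n\ge k+1$ and $g$ transcendental meromorphic, $(g^n)^{(k)}$ takes every nonzero value infinitely often, immediately contradicting the one-zero hypothesis. For rational $g$, I would write $g=P/Q$ in lowest terms and argue by explicit pole/zero accounting; the single-zero constraint forces $(g^n)^{(k)}-1$ to factor as $A(z-\alpha)^{m}\!\prod_i(z-\beta_i)^{-\mu_i}$ with the poles matching those of $(g^n)^{(k)}$, and integrating back $k$ times while requiring $g^n$ to be an $n$-th power of a rational function imposes divisibility constraints on the exponents that are infeasible under $n-k\ge 2$.

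The principal obstacle is the rational subcase in case (b): a delicate multiplicity count must be carried out to eliminate all threshold configurations at the boundary $n=k+2$, including linear-fractional candidates of the form $g(z)=(\alpha z+\beta)^{s}(\gamma z+\delta)^{-t}$ whose $n$-th powers have especially clean derivatives. By contrast, the transcendental subcase should reduce cleanly to the Wang--Fang value-distribution result, and case (a) is routine.
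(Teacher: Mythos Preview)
Your proposal is correct and follows essentially the same architecture as the paper: Pang--Zalcman rescaling with $g_j(\zeta)=\rho_j^{-k/n}f_j(z_j+\rho_j\zeta)$, Hurwitz to transfer the at-most-one-zero hypothesis to the limit $g$, and then a Picard-type contradiction handled in the transcendental case by the finite-order result (the paper's Lemma~\ref{lemma}). The one difference is that the rational subcase you flag as the ``principal obstacle'' is dispatched in the paper by direct citation of Li and Gu (Lemma~\ref{lemmali}), who prove that for $n\ge k+2$ and nonconstant rational $f$, $(f^n)^{(k)}-a$ has at least two distinct zeros---so you need not carry out the multiplicity count yourself.
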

 In this paper, we use the following standard notations of value distribution theory,
\begin{center}
$T(r,f); m(r,f); N(r,f); \overline{N}(r,f),\ldots$.
\end{center}
We denote $S(r,f)$ any function satisfying
\begin{center}
$S(r,f)=o\{T(r,f)\}$,  as $r\rightarrow +\ity,$
\end{center}
 possibly outside of a set with finite measure.\\

 \section{Preliminary results}
        In order to prove our results we need  the following Lemmas.\\

\begin{lemma}\{~\cite{Zalc}, p. 216; ~\cite{Zalc}, p. 814\}\label{lem1}(Zalcman's lemma)\\Let $\mathcal F$ be a family of meromorphic  functions in the unit disk  $\Delta$, with the property that for every function $f\in \mathcal F,$  the zeros of $f$ are of multiplicity at least $l$ and the poles of $f$ are of multiplicity at least $k$ . If $\mathcal F$ is not normal at $z_0$ in $\Delta$, then for  $-l< \alpha <k$, there exist
\begin{enumerate}
\item{ a sequence of complex numbers $z_n \rightarrow z_0$, $|z_n|<r<1$},
\item{ a sequence of functions $f_n\in \mathcal F$ },
\item{ a sequence of positive numbers $\rho_n \rightarrow 0$},
\end{enumerate}
such that $g_n(\zeta)=\rho_n^{\alpha}f_n(z_n+\rho_n\zeta) $ converges to a non-constant meromorphic function $g$ on $\C$ with $g^{\#}(\zeta)\leq g^{\#}(0)=1$. Moreover $g$ is of order at most two . \\
\end{lemma}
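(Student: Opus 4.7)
My plan is to prove Zalcman's rescaling lemma by combining Marty's criterion with a Pang--Zalcman style weighted maximization argument that produces the rescaling parameters $z_n$ and $\rho_n$ in a controlled way.

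First, I would invoke Marty's criterion: non-normality of $\mathcal{F}$ at $z_0$ is equivalent to the failure of local uniform boundedness of the spherical derivatives $\{f^{\#}:f\in\mathcal{F}\}$ near $z_0$. Thus there exist $f_n\in\mathcal{F}$ and $w_n\to z_0$ with $f_n^{\#}(w_n)\to\infty$. Shrinking to a closed disk $\overline{D(z_0,r)}\subset\Delta$ eventually containing the $w_n$, I work on this disk. A direct computation shows that the $\alpha$-rescaled function $g_n(\zeta)=\rho_n^{\alpha}f_n(z_n+\rho_n\zeta)$ has spherical derivative
$$g_n^{\#}(\zeta)=\frac{\rho_n^{1+\alpha}\,|f_n'(z_n+\rho_n\zeta)|}{1+\rho_n^{2\alpha}|f_n(z_n+\rho_n\zeta)|^{2}},$$
so the normalization $g_n^{\#}(0)=1$ becomes a joint condition on $(z_n,\rho_n)$. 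The guiding principle is to choose $(z_n,\rho_n)$ so that the right-hand side is (almost) maximized over admissible pairs, which forces $g_n^{\#}$ to be bounded above by $1$ (up to $o(1)$) on compacta.

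Second, and this is the technical heart of the argument, I would introduce the auxiliary quantity
$$\Phi_n \;=\; \sup_{z\in\overline{D(z_0,r)},\ \rho>0}\; \frac{\rho^{1+\alpha}\,|f_n'(z)|}{1+\rho^{2\alpha}|f_n(z)|^{2}}$$
and use the hypothesis $\alpha\in(-l,k)$ together with the multiplicity assumptions to see that this supremum is finite: near a zero of $f_n$ of order $m\geq l$ the exponent $\alpha+m$ is positive, and near a pole of order $m\geq k$ the relevant exponent $m-\alpha$ is positive, so the expression stays bounded. Non-normality forces $\Phi_n\to\infty$. Picking $(z_n,\rho_n)$ so that the above ratio equals $1$ (rescaling the $\rho$ variable), the extremal property propagates: for $\zeta$ in any fixed compact set one obtains $g_n^{\#}(\zeta)\leq 1+o(1)$. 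At the same time $z_n\to z_0$ and $\rho_n\to 0$ because $f_n^{\#}(w_n)\to\infty$ together with the boundedness of the weighted quantity forces the scale $\rho_n$ to collapse.

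Third, with $g_n^{\#}$ locally uniformly bounded on $\mathbb{C}$, another application of Marty's criterion (on expanding disks) shows that $\{g_n\}$ is normal on $\mathbb{C}$; passing to a subsequence, $g_n\to g$ spherically uniformly on compacta, with $g$ meromorphic on $\mathbb{C}$ or identically $\infty$. The normalization $g_n^{\#}(0)=1$ rules out the constant and infinite limits, while passage to the limit gives $g^{\#}(0)=1$ and $g^{\#}(\zeta)\leq 1$ throughout $\mathbb{C}$. Finally, since $g^{\#}$ is bounded, the Ahlfors--Shimizu formula yields $T(r,g)=O(r^{2})$, so $g$ has order at most two.

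The principal obstacle is the construction in the second paragraph: one must set up the weighted supremum $\Phi_n$ so that (i) the multiplicity hypotheses are genuinely used to keep it finite precisely in the range $-l<\alpha<k$, (ii) the attained pair $(z_n,\rho_n)$ simultaneously satisfies $z_n\to z_0$ and $\rho_n\to 0$, and (iii) the extremal property transfers from $(z_n,\rho_n)$ to all nearby rescalings so as to yield the uniform bound $g_n^{\#}(\zeta)\leq 1+o(1)$ on compacta. Handling this uniformly in $\alpha$ on the open interval $(-l,k)$, rather than only for $\alpha=0$, is what distinguishes this Pang--Zalcman strengthening from the classical Zalcman lemma.
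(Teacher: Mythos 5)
This lemma is quoted in the paper from the literature (Zalcman's surveys and, for the weighted version, Pang--Zalcman); the paper gives no proof of it, so there is nothing internal to compare against. Your outline follows the standard route (Marty's criterion, a weighted extremal choice of $(z_n,\rho_n)$, Marty again for the limit, Ahlfors--Shimizu for the order bound), and the first, third and fourth steps are essentially correct. But the step you yourself identify as the technical heart contains a genuine error: the auxiliary quantity
\begin{equation*}
\Phi_n=\sup_{z,\ \rho>0}\ \frac{\rho^{1+\alpha}\,|f_n'(z)|}{1+\rho^{2\alpha}|f_n(z)|^{2}}
\end{equation*}
is identically $+\infty$. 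Fix any $z$ with $f_n'(z)\neq 0$ and $f_n(z)\neq\infty$; as $\rho\to\infty$ the ratio behaves like $\rho^{1-\alpha}|f_n'(z)|/|f_n(z)|^2$ when $f_n(z)\neq 0$ and like $\rho^{1+\alpha}|f_n'(z)|$ when $f_n(z)=0$, and at least one of the exponents $1-\alpha$, $1+\alpha$ is positive for every real $\alpha$ (in this paper's application $\alpha=-k/n\in(-1,0)$, so both are positive). Hence the supremum over unconstrained $\rho$ diverges for every $n$, the normalization ``pick $(z_n,\rho_n)$ so the ratio equals $1$'' is vacuous, and the multiplicity hypotheses never enter. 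The fix is the one used by Pang and Zalcman: $\rho$ must be coupled to $z$ through the distance to the boundary, i.e.\ one maximizes the single-variable quantity obtained by substituting $\rho=r^2-|z|^2$ (or $1-|z|^2$), namely
\begin{equation*}
h_n(z)=\frac{(r^2-|z|^2)^{1+\alpha}|f_n'(z)|}{(r^2-|z|^2)^{2\alpha}|f_n(z)|^{2}+1}\quad\text{on }\overline{D(z_0,r)},
\end{equation*}
and it is here that the conditions $m\geq l$ at zeros and $m\geq k$ at poles, together with $-l<\alpha<k$, make $h_n$ continuous (indeed vanishing) at the zeros and poles, so that the maximum is attained at an interior point $z_n$; one then defines $\rho_n$ from $z_n$ by the normalization $g_n^{\#}(0)=1$.

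Two further points are glossed over. First, $z_n\to z_0$ is not automatic from the extremal construction on a fixed disk; one must either work on a shrinking sequence of disks $D(z_0,r_n)$ with $r_n\to 0$ (using that non-normality at $z_0$ persists on every such disk), or argue separately that the maxima of $h_n$ accumulate only at $z_0$. Second, the transfer of the extremal property to the bound $g_n^{\#}(\zeta)\leq 1+o(1)$ on compacta uses the quotient $h_n(z_n+\rho_n\zeta)/h_n(z_n)\leq 1$ together with the fact that the boundary weights at $z_n+\rho_n\zeta$ and $z_n$ have ratio tending to $1$; this is routine but is exactly where the coupling of $\rho$ to the boundary distance is needed, so it cannot be carried out with your two-variable $\Phi_n$.
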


\begin{lemma}\label{lem2}\{~\cite{lahiri}, Lemma 2.5, Lemma 2.6; \ ~\cite{lahiri 1}, Lemma 2.2\}
Let $R=\frac{P}{Q} $ be a rational function and $Q$ be non-constant. Then $(R^{(k)})_{\ity}\leq(R)_{\ity}-k,$ where $k$ is a positive integer,  $(R)_{\ity}=$ $\text{deg}(P)-\text{deg}(Q)$ and $\text{deg}(P)$ denotes the degree of P.\\
\end{lemma}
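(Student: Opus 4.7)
The plan is to first establish the $k=1$ case by directly inspecting the asymptotic behavior of $R$ at $\infty$, and then to iterate to obtain the bound for arbitrary $k$. Set $p=\text{deg}(P)$, $q=\text{deg}(Q)$, and $d:=(R)_{\ity}=p-q$.

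For the base case $k=1$, I would work with the Laurent expansion of $R$ at $\infty$. Since $R$ is rational with $(R)_\ity=d$, one has
\begin{equation*}
R(z)=c_d z^d+c_{d-1}z^{d-1}+c_{d-2}z^{d-2}+\cdots,\qquad c_d\neq 0,
\end{equation*}
valid for $|z|$ sufficiently large. Termwise differentiation then gives
\begin{equation*}
R'(z)=dc_d z^{d-1}+(d-1)c_{d-1}z^{d-2}+\cdots.
\end{equation*}
If $d\neq 0$ the coefficient $dc_d$ is nonzero, so $(R')_\ity=d-1=(R)_\ity-1$. If $d=0$ the leading coefficient is annihilated and the next term is at worst of order $z^{-2}$, giving $(R')_\ity\le -2\le (R)_\ity-1$. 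The same bound can equivalently be derived by polynomial division: writing $R=A+B/Q$ with $A$ a polynomial of degree $\max\{0,d\}$ and $\text{deg}(B)<q$, one has $R'=A'+(B'Q-BQ')/Q^{2}$, whose numerator has degree at most $\text{deg}(B)+q-1$ against a denominator of degree $2q$, yielding the same conclusion.

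For the inductive step, assuming the bound for order $k-1$, I would apply the base case to the rational function $R^{(k-1)}$:
\begin{equation*}
(R^{(k)})_\ity=\bigl((R^{(k-1)})'\bigr)_\ity\le (R^{(k-1)})_\ity-1\le (R)_\ity-k.
\end{equation*}
No extra hypothesis is needed to restart the base step, because the $k=1$ inequality actually holds for every rational function: even if $R^{(k-1)}$ happens to reduce to a polynomial $f$, one has $(f')_\ity=(f)_\ity-1$ when $\text{deg}\, f\ge 1$ and $f'\equiv 0$ otherwise, so the bound remains valid.

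The main technical subtlety lies in the case $d=0$ of the base step: the leading term of $R$ is a nonzero constant that differentiates to zero, so the inequality becomes strict rather than an equality. The Laurent-series viewpoint makes this transparent, since differentiation lowers each exponent by one but simultaneously annihilates any constant term, which is exactly why one obtains a bound rather than equality in general.
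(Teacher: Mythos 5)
The paper does not actually prove Lemma \ref{lem2} at all --- it is imported verbatim from Zeng and Lahiri --- so there is no in-paper argument to measure yours against; what you have written is a correct, self-contained replacement. The base case is sound: in the Laurent expansion at $\infty$ the coefficient of $z^{d-1}$ in $R'$ is $dc_d$, which survives exactly when $d\neq 0$, and when $d=0$ the derivative is $O(z^{-2})$, so $(R')_{\infty}\le (R)_{\infty}-1$ in every case; the induction then closes because, as you observe, this one-step inequality holds for arbitrary rational functions once one adopts the convention $\deg 0=-\infty$ for an identically vanishing derivative (which covers $R^{(k-1)}$ constant). Two cosmetic remarks: in the polynomial-division variant, when $d<0$ the quotient $A$ is the zero polynomial rather than a polynomial of degree $\max\{0,d\}=0$ (harmless for the bound, since then $B=P$ and the fractional part alone gives $(R')_{\infty}\le \deg P-\deg Q-1$); and it would be worth one line noting that $(R)_{\infty}=\deg P-\deg Q$ is independent of the chosen representation $P/Q$, since the lemma does not assume the fraction is in lowest terms and your Laurent-series normalization $c_d\neq 0$ implicitly uses that invariance. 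Neither point affects correctness.
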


\begin{lemma}\label{lem3}\ {~\cite{lahiri}}
Let $R = a_mz^m + \ldots + a_1z + a_0 +\frac{P}{B},$ where $a_0, a_1, \ldots, a_{m-1}, a_m(\neq0) $ are constants, $m$ is a positive integer and $P,\  B$ are polynomials with deg$(P)<$ deg$(B)$. If $k\leq m$,  then $(R^{(k)})_{\ity} = (R)_{\ity}-k,$ \\
\end{lemma}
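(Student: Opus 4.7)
The plan is to split $R$ into its polynomial and proper rational parts, handle each separately under differentiation, and reassemble at the end, tracking only leading behavior at infinity.

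First I would establish the value of $(R)_\infty$ itself. Writing $R$ over the common denominator $B$ gives
\[
R \;=\; \frac{(a_m z^m + \cdots + a_1 z + a_0)\,B + P}{B}.
\]
Since $\deg P < \deg B$ and $a_m \neq 0$, the top term of the numerator is $a_m z^{m + \deg B}$, so $(R)_\infty = m$.

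Next, set $S(z) = a_m z^m + \cdots + a_0$ and $T = P/B$, so that $R^{(k)} = S^{(k)} + T^{(k)}$. Because $k \le m$ and $a_m \neq 0$, the polynomial $S^{(k)}$ has degree exactly $m-k$ with leading coefficient $a_m\, m!/(m-k)!$. For the rational piece, $(T)_\infty = \deg P - \deg B < 0$, so Lemma~\ref{lem2} yields $(T^{(k)})_\infty \le (T)_\infty - k < 0$; equivalently, writing $T^{(k)} = N/D$ in lowest terms, we have $\deg N < \deg D$.

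Finally I would combine:
\[
R^{(k)} \;=\; S^{(k)} + \frac{N}{D} \;=\; \frac{S^{(k)} D + N}{D}.
\]
The degree of $S^{(k)} D$ is $(m-k) + \deg D$, while $\deg N < \deg D \le (m-k) + \deg D$, so the leading term of $S^{(k)}D$ is not cancelled. Hence the combined numerator has degree $(m-k) + \deg D$, which gives $(R^{(k)})_\infty = m - k = (R)_\infty - k$, as required.

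There is no substantial obstacle here; the only point requiring care is ensuring that the contribution of $T^{(k)}$ cannot cancel the leading term coming from $S^{(k)}$ after putting everything over a common denominator. This is precisely what Lemma~\ref{lem2} (or a direct quotient-rule count) guarantees, by forcing $T^{(k)}$ to remain a proper rational function. The hypothesis $k \le m$ is used exactly once, to guarantee that $S^{(k)}$ itself does not vanish and genuinely supplies a term of degree $m-k$.
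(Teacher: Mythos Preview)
The paper does not supply its own proof of this lemma: it is quoted from~\cite{lahiri} and left unproved. So there is nothing in the paper to compare your argument against, and I can only comment on the argument itself.

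Your proof is correct. Splitting $R$ into its polynomial part $S$ and its strictly proper part $T=P/B$, differentiating separately, and then recombining over a common denominator is exactly the natural approach, and your degree count at the end is accurate: the leading term of $S^{(k)}D$ has degree $(m-k)+\deg D$, which strictly exceeds $\deg N$, so no cancellation occurs. The invocation of Lemma~\ref{lem2} to control $(T^{(k)})_\infty$ is legitimate; note only that Lemma~\ref{lem2} requires the denominator to be non-constant. This is automatic here: if $B$ were constant then $\deg P<\deg B=0$ forces $P=0$, so $T\equiv 0$ and the claim reduces to the trivial polynomial case $R=S$. You might add one sentence disposing of this edge case, but otherwise the argument is complete.
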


\begin{lemma}\label{lem4}
Let $k, \ n$ is  two positive integer and  $n\geq k+1.$ Let $f$ be a non-constant rational function then $(f^n)^{(k)}-b$ has a root for all nonzero complex numbers $b$.\\
\end{lemma}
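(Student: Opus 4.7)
My approach is a proof by contradiction: assume that for some nonzero $b \in \C$ the rational function $(f^n)^{(k)} - b$ has no zero in $\C$. Since a rational function without zeros must have the form $c/H(z)$ for a constant $c$ and a polynomial $H$, the first step is to write $(f^n)^{(k)} - b = c/H(z)$. A short preliminary observation disposes of the degenerate case where $(f^n)^{(k)}$ is a constant (i.e., $c = 0$ or $\deg H = 0$): then $f^n$ is a polynomial of degree at most $k$, forcing $f$ itself to be polynomial, and the estimate $n\deg(f) \leq k$ combined with $n \geq k+1$ and $\deg(f) \geq 1$ yields a contradiction. In the remaining case $c \neq 0$ and $\deg H \geq 1$, so we obtain the key asymptotic $(f^n)^{(k)}(z) \to b$ as $z \to \infty$; in particular $(f^n)^{(k)}$ is bounded near infinity.

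The bulk of the argument then splits on the sign of $(f)_\infty$. If $(f)_\infty \geq 1$, the polynomial part of $f^n$ has leading degree $m = n(f)_\infty \geq n \geq k+1 > k$, so Lemma \ref{lem3} applies and yields $((f^n)^{(k)})_\infty = m - k \geq 1$. Hence $(f^n)^{(k)}$ has a pole at infinity, contradicting its boundedness. If $(f)_\infty \leq 0$, then $f$ cannot be a polynomial, so $f^n$ is a proper rational function with non-constant denominator and Lemma \ref{lem2} applies, giving $((f^n)^{(k)})_\infty \leq (f^n)_\infty - k \leq -k < 0$. Thus $(f^n)^{(k)}(z) \to 0$ as $z \to \infty$, and comparison with the earlier asymptotic forces $b = 0$, contradicting $b \neq 0$.

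The main obstacle I anticipate is executing the case split cleanly, which means verifying the hypotheses of the correct growth lemma in each branch. The condition $n \geq k+1$ is used in a genuinely essential way only in the first case, precisely to guarantee $m > k$ so that Lemma \ref{lem3} applies and produces a pole (rather than a zero or a regular point) of $(f^n)^{(k)}$ at infinity. The various degenerate sub-cases, such as when $(f^n)^{(k)} - b$ reduces to a constant or when $f$ is polynomial versus proper rational, must be tracked explicitly, but they become routine once the normal form $c/H$ is in hand.
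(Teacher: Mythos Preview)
Your argument is correct. Both proofs ultimately reduce to a contradiction between the behaviour of $(f^n)^{(k)}$ at infinity forced by the assumption and the behaviour predicted by the growth lemmas, but the organization differs. The paper first disposes of polynomial $f$ via the fundamental theorem of algebra, and in the non-polynomial case writes $f$ explicitly in factored form, carries out a direct degree count on the numerator and denominator of $(f^n)^{(k)}$ to deduce $M-N\geq k$ (hence $(f)_\infty\geq 1$), and only then invokes Lemma~\ref{lem3}. You instead extract immediately from $(f^n)^{(k)}-b=c/H$ the single piece of information $((f^n)^{(k)})_\infty=0$, and then split on the sign of $(f)_\infty$: for $(f)_\infty\geq 1$ Lemma~\ref{lem3} gives $((f^n)^{(k)})_\infty\geq 1$, while for $(f)_\infty\leq 0$ Lemma~\ref{lem2} gives $((f^n)^{(k)})_\infty\leq -k<0$, each contradicting the value $0$. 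Your route is a bit cleaner, since it replaces the explicit factored-form calculation by a direct appeal to Lemma~\ref{lem2} (which the paper states but does not actually use in its own proof of this lemma), and it handles polynomial and non-polynomial $f$ uniformly within the case $(f)_\infty\geq 1$.
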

\begin{proof}
Suppose $(f^n)^{(k)}- b$ has no root. First we suppose $f$ is a non-constant polynomial of degree $d \geq 1$, then $(f^n)^{(k)}-b$ is a polynomial of degree $nd-k \geq 1$. Thus by fundamental theorem of  algebra $(f^n)^{(k)}-b$ has a solution.\\


Again, let $f$ is a non-polynomial  rational function. We set
 \begin{equation}\label{eq1}
 f(z)= A\frac{(z-\al_1)^{m_1}(z-\al_2)^{m_2}\ldots (z-\al_s)^{m_s}}{(z-\be_1)^{n_1}(z-\be_2)^{n_2}\ldots (z-\be_t)^{n_t}},
 \end{equation}
 where $A$ is a nonzero constant and $m_1, m_2, \ldots, m_s,n_1, n_2, \ldots, n_t$ are positive integers. We denote
 \begin{equation}\notag
 M=n\sum_{i=1}^{s}m_i, \ N=n\sum_{i=1}^{t}n_i.
 \end{equation}
 \begin{equation}\label{eq2}
 (f^n)^{(k)}(z) = \frac{(z-\al_1)^{nm_1-k}(z-\al_2)^{nm_2-k}\ldots (z-\al_s)^{nm_s-k}g(z)}{(z-\be_1)^{nn_1+k}(z-\be_2)^{nn_2+k}\ldots (z-\be_t)^{nn_t+k}}=\frac{p}{q},
 \end{equation}
 where $g(z)$ is a polynomial and deg$(g)\leq k(s+t-1).$ Suppose $(f^n)^{(k)}(z)\neq b$, then
 \begin{equation}\label{eq3}
 (f^n)^{(k)}(z)= b + \frac{B}{(z-\be_1)^{nn_1+k}(z-\be_2)^{nn_2+k}\ldots (z-\be_t)^{nn_t+k}}=\frac{p}{q}
 \end{equation}
 from\eqref{eq2} and \eqref{eq3} $N+kt=$ deg$(q)=$ deg$(p)=M-ks+$ deg$(g)\leq M-ks+k(s+t-1)=M+kt-k $. This gives $M-N\geq k$ $i.e. n(\sum^{i=1}_{s}m_i-\sum^{i=1}_{t}n_i)\geq k$. This implies $(\sum_{i=1}^{s}m_i-\sum_{i=1}^{t}n_i)> 1$. So $(f)_{\infty}>1$ hence $(f^n)_{\infty}>n$. Therefore we can express $f^n $ as follows
 \begin{equation}\notag
 f^n(z)= a_mz^m + \ldots + a_1z + a_0 +\frac{P}{B},
 \end{equation}
  where $a_0, a_1, \ldots, a_{m-1}, a_m(\neq0) $ are constants, $m\geq n$ is an integer, $P$ and $ B$ are polynomials with deg$(P)<$ deg$(B)$. Since $m>k$, then by \ref{lem3} we get
  \begin{equation}\notag
  ((f^n)^{(k)})_{\ity}=(f^n)_{\ity}-k > n-k\geq 1,
  \end{equation}
which contradicts the fact that deg$(p) =$  deg$(q)$.  Thus $(f^n)^{(k)}(z)- b$ has a solution in $\C.$\\
 \end{proof}
 \
\begin{lemma}\label{lemmali}~\cite{Li}
Let $n, k$ be positive integers such that $n\geq k+2$ and $a\neq0$ be a finite complex number, and $f$ be a non-constant rational meromorphic function, then $(f^n)^{(k)} - a$ has at least two distinct zeros.
\end{lemma}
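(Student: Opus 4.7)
The plan is to argue by contradiction in close parallel with Lemma \ref{lem4}, upgrading its one-zero conclusion to a two-distinct-zero conclusion under the stronger hypothesis $n \geq k+2$. Assume $(f^n)^{(k)} - a$ has at most one distinct zero in $\C$, and split according to whether $f$ is a polynomial.

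Suppose first that $f$ is a polynomial of degree $d \geq 1$. Then $(f^n)^{(k)} - a$ is a polynomial of degree $nd - k \geq n - k \geq 2$, and the at-most-one-zero hypothesis forces $(f^n)^{(k)} - a = A(z-z_0)^{nd-k}$ for some $A \neq 0$ and $z_0 \in \C$. Differentiating once more gives $(f^n)^{(k+1)} = A(nd-k)(z-z_0)^{nd-k-1}$, whose only zero is $z_0$. Since $n \geq k+2$, iterated product-rule differentiation shows that $(f^n)^{(k+1)}$ carries $f^{n-k-1}$ as a factor, so every zero of $f$ must coincide with $z_0$; hence $f = c(z-z_0)^d$. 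A direct computation then yields $(f^n)^{(k)} - a = C(z-z_0)^{nd-k} - a$ with $C \neq 0$, which has exactly $nd - k \geq 2$ distinct zeros (none equal to $z_0$, since $a \neq 0$), a contradiction.

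Suppose next that $f$ is a non-polynomial rational function, written in the form \eqref{eq1} with $t \geq 1$ poles, and retain the notation $M = n\sum_{i=1}^{s} m_i$, $N = n\sum_{j=1}^{t} n_j$. If $(f^n)^{(k)} - a$ has no zero at all, Lemma \ref{lem4} (which applies since $n \geq k+1$) already gives a contradiction, so we may assume $(f^n)^{(k)} - a$ has exactly one distinct zero $z_0 \in \C$ of some multiplicity $l \geq 1$. Since the only finite poles of $(f^n)^{(k)} - a$ are at the $\beta_j$ with order $nn_j + k$, we can write
\[ (f^n)^{(k)}(z) \;=\; a \;+\; \frac{B(z-z_0)^{l}}{\prod_{j=1}^{t}(z-\beta_j)^{nn_j+k}}, \qquad B \neq 0. \]
Differentiating this display gives one explicit expression for $(f^n)^{(k+1)}$, while iterated product-rule differentiation gives $(f^n)^{(k+1)} = f^{n-k-1} R$ for some rational $R$, with $n-k-1 \geq 1$; in particular each $\alpha_i$ is a zero of $(f^n)^{(k+1)}$ of multiplicity at least $(n-k-1) m_i$. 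Controlling $((f^n)^{(k)})_{\infty}$ via Lemmas \ref{lem2} and \ref{lem3} pins down $l$ in terms of $M, N, s, t, k$, and combining this with the degree bound on the polynomial numerator of $(f^n)^{(k+1)}$ coming from the displayed formula yields a total-degree inequality incompatible with $n \geq k+2$, which is the desired contradiction.

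The bulk of the work, and the main obstacle, lies in the rational case: the polynomial case is clean, and the no-zero sub-case reduces immediately to Lemma \ref{lem4}. The decisive observation is that the step from $n \geq k+1$ to $n \geq k+2$ is exactly what produces the nontrivial factor $f^{n-k-1}$ in $(f^n)^{(k+1)}$, and it is this factor, forcing each $\alpha_i$ to contribute at least $(n-k-1)m_i$ to the zeros of a polynomial whose degree has already been determined by $l$, that overconstrains the single-zero scenario.
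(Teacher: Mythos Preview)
The paper does not prove Lemma~\ref{lemmali}; it is quoted from Li and Gu~\cite{Li} and used as a black box. So there is no paper proof to compare your argument against.

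On the merits of your attempt: the polynomial case is complete and correct. The observation that $(f^n)^{(k+1)}$ carries $f^{n-k-1}$ as a factor, forcing $f=c(z-z_0)^d$ and then an explicit contradiction, is exactly right.

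The rational case, however, is only an outline, not a proof. You set up the correct representation
\[
(f^n)^{(k)}(z)=a+\frac{B(z-z_0)^{l}}{\prod_{j=1}^{t}(z-\beta_j)^{nn_j+k}}
\]
and correctly identify the crucial factor $f^{n-k-1}$ in $(f^n)^{(k+1)}$, but the sentence ``Controlling $((f^n)^{(k)})_\infty$ via Lemmas~\ref{lem2} and~\ref{lem3} pins down $l$ \ldots\ yields a total-degree inequality incompatible with $n\geq k+2$'' is an assertion, not an argument. In the actual Li--Gu proof this is where essentially all of the work lies: one must separately handle the cases $M\geq N$ and $M<N$ (equivalently, whether $f^n$ has a nontrivial polynomial part or not), determine $l$ in each case by matching degrees at infinity, differentiate the displayed formula to get an explicit numerator for $(f^n)^{(k+1)}$, and then compare the total zero-multiplicity forced by the factor $\prod_i(z-\alpha_i)^{(n-k-1)m_i}$ against the degree of that numerator. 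Each sub-case requires its own inequality, and the margins are tight; the computation does not simply fall out. You have identified the right mechanism but have not executed it, and you yourself flag this as ``the main obstacle.'' As written, the rational case is a plausible plan rather than a proof.
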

 \begin{lemma}\label{lemm1}\{~\cite{Yang} P.38\}
 Let $f(z)$ be a transcendental  meromorphic function on $\C$ , then
 \begin{equation}\notag
 m(r, \frac{f^{(k)}}{f}) = S(r, f)
 \end{equation}
 for every positive integer $k$.\\
\end{lemma}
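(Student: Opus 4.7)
The plan is to establish the estimate first in the case $k=1$ and then extend to arbitrary $k$ by a telescoping/induction argument.

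For $k=1$, I would start from the Poisson--Jensen formula: for $|z|<R$, writing $a_\mu$ for the zeros and $b_\nu$ for the poles of $f$ in $|w|\le R$, one has (up to an imaginary additive constant)
$$\log f(z) = \frac{1}{2\pi}\int_0^{2\pi}\frac{Re^{i\theta}+z}{Re^{i\theta}-z}\log|f(Re^{i\theta})|\,d\theta + \sum_\mu\log\frac{R^2-\bar a_\mu z}{R(z-a_\mu)} - \sum_\nu\log\frac{R^2-\bar b_\nu z}{R(z-b_\nu)}.$$
Differentiating in $z$ gives an explicit representation of $f'(z)/f(z)$. Taking $\log^+$, integrating on $|z|=r<R$, and bounding the boundary integral by $\frac{R+r}{(R-r)^2}\bigl(m(R,f)+m(R,1/f)\bigr)$ and the zero/pole sums by $O\bigl(n(R,f)+n(R,1/f)\bigr)\log\frac{R+r}{R-r}$, one arrives at
$$m\!\left(r,\frac{f'}{f}\right) = O\!\left(\log^+ T(R,f) + \log R + \log\frac{1}{R-r}\right).$$
Choosing $R = r + 1/T(r,f)$ and invoking the Borel growth lemma to obtain $T(R,f)\le 2T(r,f)$ outside an exceptional set of finite Lebesgue measure yields $m(r,f'/f)=O\bigl(\log(rT(r,f))\bigr)=S(r,f)$.

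For arbitrary $k$, I would telescope:
$$\frac{f^{(k)}}{f} = \frac{f^{(k)}}{f^{(k-1)}}\cdot\frac{f^{(k-1)}}{f^{(k-2)}}\cdots\frac{f'}{f},$$
so that
$$m\!\left(r,\frac{f^{(k)}}{f}\right) \le \sum_{j=1}^{k} m\!\left(r,\frac{(f^{(j-1)})'}{f^{(j-1)}}\right) + O(1).$$
Since each $f^{(j-1)}$ is a transcendental meromorphic function satisfying $T(r,f^{(j-1)})=O(T(r,f))+S(r,f)$ (a consequence of $N(r,f^{(j-1)})\le N(r,f)+(j-1)\overline N(r,f)$ combined with the $k=1$ estimate applied iteratively to $f,f',\ldots,f^{(j-2)}$), the error terms $S(r,f^{(j-1)})$ all collapse into $S(r,f)$. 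Applying the already-proved $k=1$ case to each factor then gives the claim.

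The main obstacle is the $k=1$ step: the Poisson--Jensen representation must be differentiated and the resulting zero/pole sums bounded carefully, and one needs the Borel growth lemma to promote the crude bound involving $T(R,f)$ at a slightly larger radius into an honest $S(r,f)$ bound at radius $r$, at the cost of an exceptional set of finite measure. Once that is in place, the passage to higher $k$ is purely formal, relying only on the elementary fact that taking derivatives inflates the Nevanlinna characteristic by at most a multiplicative constant plus an $S(r,f)$ term.
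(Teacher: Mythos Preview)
The paper does not actually prove this lemma: it is stated with a citation to Yang's \emph{Value Distribution Theory} and used as a black box. Your outline is the standard Nevanlinna proof of the Lemma on the Logarithmic Derivative --- differentiate the Poisson--Jensen representation to control $m(r,f'/f)$, choose $R$ slightly larger than $r$ and invoke the Borel growth lemma to replace $T(R,f)$ by $T(r,f)$ outside a set of finite measure, then telescope via $f^{(k)}/f=\prod_{j=1}^{k}(f^{(j)}/f^{(j-1)})$ for general $k$ --- and is essentially what one finds in Yang or Hayman. There is thus nothing to compare: your sketch is correct in its architecture and is precisely the textbook argument the paper is citing.
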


\begin{lemma}\label{lemm2}
Let $f$ be a non-constant meromorphic function on $\C$. Then
\begin{equation}\notag
T(r, f^{(k)} \leq T(r, f) + k \overline{N}(r, f) + S(r, f)
\end{equation}
\end{lemma}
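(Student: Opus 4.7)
The plan is to decompose $T(r,f^{(k)})$ into its proximity and counting parts and handle each separately, using the logarithmic derivative lemma (Lemma \ref{lemm1}) for the proximity part and a direct pole-order count for the counting part.

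First I would write
\begin{equation}\notag
T(r,f^{(k)}) = m(r,f^{(k)}) + N(r,f^{(k)}).
\end{equation}
For the proximity term, the trick is to insert $f/f$:
\begin{equation}\notag
m(r,f^{(k)}) = m\!\left(r,\tfrac{f^{(k)}}{f}\cdot f\right) \leq m\!\left(r,\tfrac{f^{(k)}}{f}\right) + m(r,f).
\end{equation}
By Lemma \ref{lemm1}, the first summand on the right is $S(r,f)$, so $m(r,f^{(k)}) \leq m(r,f) + S(r,f)$.

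Next I would handle the counting term by inspecting the pole orders. If $z_0$ is a pole of $f$ of multiplicity $p$, then $z_0$ is a pole of $f^{(k)}$ of multiplicity exactly $p+k$. Summing over all poles of $f$ in the disk of radius $r$ (counted with multiplicity for $N$, without for $\overline N$) yields
\begin{equation}\notag
N(r,f^{(k)}) = N(r,f) + k\,\overline{N}(r,f).
\end{equation}
Adding the two estimates gives
\begin{equation}\notag
T(r,f^{(k)}) \leq m(r,f) + N(r,f) + k\,\overline{N}(r,f) + S(r,f) = T(r,f) + k\,\overline{N}(r,f) + S(r,f),
\end{equation}
which is the desired inequality.

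There is no real obstacle: the whole argument is a textbook application of the logarithmic derivative lemma already quoted as Lemma \ref{lemm1}, together with the elementary observation about how differentiation increases pole multiplicities by exactly $k$. The only thing to note is that the statement is understood in the transcendental regime where $S(r,f)=o(T(r,f))$ is meaningful; for rational $f$ the inequality holds trivially from the degree considerations already used in Lemma \ref{lem2}.
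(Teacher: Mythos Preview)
Your proof is correct and is essentially identical to the paper's own argument: both decompose $T(r,f^{(k)})=m(r,f^{(k)})+N(r,f^{(k)})$, bound the proximity term via $m(r,f^{(k)})\leq m(r,f^{(k)}/f)+m(r,f)$ together with Lemma~\ref{lemm1}, and use the pole-multiplicity count $N(r,f^{(k)})\leq N(r,f)+k\,\overline{N}(r,f)$. The paper merely compresses these steps into three lines, while you spell out the pole-order reasoning explicitly.
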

\begin{proof}
\begin{align*}
T(r, f^{(k)}) & = N(r, f^{(k)}) + m(r, f^{(k)})\\
&\leq  N(r, f) + k\overline{N}(r, f) + m(r, f) + m(r, \frac{f^{(k)}}{f})\\
& \leq T(r,f) + k\overline{N}(r, f) + S(r, f)
\end{align*}
\end{proof}
\begin{lemma}\label{lem5}~\cite{frank}
Let $f(z)$ be a transcendental meromorphic function. Then for each positive number $\epsilon$ and each positive integer $k$, we have
\begin{equation}\notag
k\overline{N}(r,f)\leq N(r, \frac{1}{f^{(k)}}) + N(r,f) + \epsilon T(r,f) +S(r,f).\\
\end{equation}
\end{lemma}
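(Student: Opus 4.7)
The plan is to use Nevanlinna's first fundamental theorem to recast the desired bound as a proximity inequality, and then to establish that proximity inequality via the logarithmic derivative lemma together with an auxiliary-function construction, following the strategy of \cite{frank}.

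First, I would invoke the elementary pole-counting identity
\[
N(r, f^{(k)}) \;=\; N(r, f) \;+\; k\,\overline{N}(r, f),
\]
obtained by observing that a pole of $f$ of multiplicity $m$ is exactly a pole of $f^{(k)}$ of multiplicity $m+k$. Using Nevanlinna's first fundamental theorem $T(r, f^{(k)}) = T(r, 1/f^{(k)}) + O(1)$, this identity converts the claim into the equivalent proximity inequality
\[
m\!\left(r, \tfrac{1}{f^{(k)}}\right) - m(r, f^{(k)}) \;\leq\; 2N(r, f) + \epsilon\,T(r, f) + S(r, f).
\]
The virtue of this reformulation is that it isolates the genuine analytic content in a proximity statement, where the tools of the logarithmic derivative lemma become available.

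Second -- and this is the crux -- one must prove the above proximity bound. Lemma \ref{lemm1} immediately yields $m(r, f^{(k)}) \leq m(r, f) + S(r, f)$, so $-m(r, f^{(k)})$ causes no trouble. The real difficulty is $m(r, 1/f^{(k)})$: unlike the logarithmic derivative $f^{(k)}/f$, the reciprocal ratio $f/f^{(k)}$ is not controlled by Lemma \ref{lemm1} directly. The remedy, due to Frank \cite{frank}, is to introduce an auxiliary differential polynomial $F$ built from $f, f', \ldots, f^{(k)}$ (typically a Wronskian-type expression) with two key features: (i) the proximity $m(r, 1/f^{(k)})$ is majorised by a combination of $m(r, 1/F)$ and logarithmic derivatives of $F$ up to an $S(r, f)$ error; and (ii) the poles of $F$ are concentrated at the \emph{distinct} poles of $f$ with multiplicity-pattern matching $N(r, f)$ rather than $k\,\overline{N}(r, f)$. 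Feeding Lemma \ref{lemm1} and, if necessary, Nevanlinna's second main theorem into the analysis of $F$ then yields the required estimate, with the $S(r, f) = o(T(r, f))$ error dominated by any prescribed $\epsilon\,T(r, f)$ outside an exceptional set of finite measure.

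The \emph{main obstacle} is this second step. The naive upper bound $m(r, 1/f^{(k)}) \leq T(r, f^{(k)}) + O(1)$ combined with Lemma \ref{lemm2} only gives $m(r, 1/f^{(k)}) \leq T(r, f) + k\,\overline{N}(r, f) + S(r, f)$, so one ends up with a coefficient of $k\,\overline{N}(r, f)$ where only $N(r, f)$ is permitted. Reducing this coefficient from $k$ to $1$ (at the cost of only an $\epsilon T(r, f)$ slack) is precisely the content of Frank's contribution: without the auxiliary-function trick from \cite{frank}, the desired bound cannot be extracted from the standard Nevanlinna-theoretic toolkit assembled in Lemmas \ref{lemm1} and \ref{lemm2} alone.
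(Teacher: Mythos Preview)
The paper does not supply its own proof of this lemma: it is quoted verbatim from Frank--Weissenborn \cite{frank} and used as a black box, so there is no in-paper argument to compare your proposal against.

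That said, your outline is a faithful high-level description of the Frank--Weissenborn strategy. The pole-counting identity $N(r,f^{(k)}) = N(r,f) + k\overline{N}(r,f)$ and the reduction to a proximity estimate are correct and standard, and you have correctly identified the genuine obstacle: controlling $m(r,1/f^{(k)})$ without incurring a $k\overline{N}(r,f)$ term. Your description of the remedy---an auxiliary Wronskian-type differential polynomial whose pole structure matches $N(r,f)$ rather than $k\overline{N}(r,f)$---is accurate in spirit, though in the actual Frank--Weissenborn paper the construction and the accompanying estimates occupy several pages and are not as lightly dispatched as your sketch suggests. As a proof \emph{plan} it is sound; as a proof it would still need the explicit definition of $F$ and the detailed bookkeeping of its Nevanlinna functions, which is exactly the content of \cite{frank}.
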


\begin{lemma}\label{lem6}\{~\cite{berg} Corollary 3.\}
 If a meromorphic function of finite order $\rho$ has only finitely many critical values, then it has at most $2\rho $ asymptotic values.\\
\end{lemma}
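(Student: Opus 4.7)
The plan is to adapt the classical Denjoy--Carleman--Ahlfors argument: replace each asymptotic value by a logarithmic tract, and invoke Ahlfors' length--area distortion estimate to bound the number of such tracts by $2\rho$.

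First I would enumerate the distinct asymptotic values $a_1,\ldots,a_p$ of $f$. Using the hypothesis that the set $C$ of critical values of $f$ is finite, I would choose mutually disjoint spherical discs $D_j$ about the $a_j$ whose closures are disjoint from $C$. For each $j$ fix an asymptotic path $\gamma_j$ tending to $\infty$ along which $f\to a_j$, and take $U_j$ to be the component of $f^{-1}(D_j\setminus\{a_j\})$ containing a tail of $\gamma_j$. Since $D_j\setminus\{a_j\}$ contains no critical value, $f:U_j\to D_j\setminus\{a_j\}$ is an unramified covering, and since $U_j$ is unbounded it must be the universal cover, so each $U_j$ is simply connected and $a_j$ is a logarithmic singularity of $f^{-1}$. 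The $U_j$ are pairwise disjoint because they lie over pairwise disjoint discs.

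Next I would bound $p$ by $2\rho$ using the standard length--area machinery. Let $\theta_j(r)$ be the angular measure of $U_j\cap\{|z|=r\}$. Since $f$ avoids $a_j$ on $U_j$, the function $\log|f-a_j|^{-1}$ is harmonic on $U_j$ and bounded on $\partial U_j$, and Ahlfors' distortion theorem provides a lower bound
\[
\sup_{z\in U_j,\,|z|=r}\log\frac{1}{|f(z)-a_j|}\;\ge\;\pi\int_{r_0}^{r}\frac{dt}{t\,\theta_j(t)}-O(1),
\]
with the obvious substitution $\log|f|$ when $a_j=\infty$. Disjointness of the tracts forces $\sum_{j}\theta_j(r)\le 2\pi$, so Cauchy--Schwarz yields $\sum_j 1/\theta_j(r)\ge p^{2}/(2\pi)$; thus for a positive-density set of $r$ some $\theta_j(r)\le 2\pi/p$, producing growth of $1/(f-a_j)$ on $U_j$ of order at least $p/2$. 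Comparing with the finite-order bound $T(r,f)=O(r^{\rho+\varepsilon})$, which controls the above supremum up to a constant factor, forces $p\le 2\rho$.

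The main obstacle is the first step: converting an abstract asymptotic value into a simply connected unbounded tract on which $f$ omits a value. Without the finite-critical-value hypothesis the component $U_j$ could be infinitely connected and the Ahlfors estimate would not be directly applicable; it is precisely this hypothesis that puts the situation into the classical Denjoy--Carleman--Ahlfors framework, after which the bound $p\le 2\rho$ follows from the standard length--area estimate as sketched above, which is essentially Bergweiler's argument in \cite{berg}.
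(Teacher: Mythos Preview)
The paper does not prove this lemma at all: it is quoted as Corollary~3 of Bergweiler--Eremenko \cite{berg} and used purely as a black box, so there is no in-paper argument to compare your proposal against.

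For what it is worth, your sketch is the standard Denjoy--Carleman--Ahlfors route that underlies the cited result, and it is essentially how \cite{berg} proceeds: the finiteness of the critical-value set forces every transcendental singularity of $f^{-1}$ to be logarithmic, giving disjoint simply connected tracts, and the Ahlfors distortion (length--area) estimate then bounds the number of tracts by $2\rho$. Two small points you would want to tighten in a full write-up: (i) the passage ``since $U_j$ is unbounded it must be the universal cover'' tacitly uses that $f$ is transcendental, otherwise a finite-sheeted cover with its puncture at $\infty$ is possible; and (ii) the comparison ``$T(r,f)=O(r^{\rho+\varepsilon})$ controls the above supremum up to a constant factor'' needs the usual Borel--Carath\'eodory/maximum-modulus versus characteristic inequality rather than a bare constant. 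Neither is a real gap in the strategy.
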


\begin{lemma}\label{lem7}~\cite{berg1}
Let $g(z)$ be a transcendental meromorphic function and suppose that  $g(0)\neq \ity$ and the set of finite critical and asymptotic values of $g(z)$ is bounded. then there exists $R>0$ such that
\begin{equation}\notag
|g'(z)|\geq \frac{|g(z)|}{2\pi|z|}\log\frac{|g(z)|}{R},
\end{equation}
for all $z\in \C\setminus\{0\}$ which are not poles of $g(z)$.\\
\end{lemma}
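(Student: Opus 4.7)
The plan is to use a logarithmic change of variables combined with a Koebe-type distortion estimate for univalent maps into the punctured plane, following the method of Bergweiler and Eremenko.

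\emph{Setup.} I would first enlarge $R$ if necessary so that every finite critical and asymptotic value of $g$ lies in $\{|w|\leq R\}$ and so that $|g(0)|\leq R$, using $g(0)\neq\infty$. Put $U=\{w\in\C:|w|>R\}$. For a point $z\in\C\setminus\{0\}$ with $|g(z)|>R$ and $z$ not a pole of $g$ (the inequality being trivial when $|g(z)|\leq R$, since the right-hand side is then nonpositive), let $V$ denote the connected component of $g^{-1}(U)$ containing $z$. The choice of $R$ ensures that $U$ contains no critical values of $g$, and Iversen's theorem together with the boundedness of the asymptotic-value set shows that $U$ contains no asymptotic values either, so $g\colon V\to U$ is an unramified covering. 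Also $0\notin V$ since $|g(0)|\leq R$.

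\emph{Lifting through $\exp$.} The universal covering of $U$ is $\exp\colon H\to U$, where $H=\{\zeta\in\C:\RE\zeta>\log R\}$. Lifting $g\colon V\to U$ through $\exp$, a local branch $\phi$ of $\log g$ at $z$ admits single-valued analytic continuation along every path in $H$, yielding a holomorphic map $\psi\colon H\to V$ with $g\circ\psi=\exp$ and $\psi(\phi(z))=z$. This $\psi$ is locally univalent, and the $2\pi i$-periodicity of $\exp$ ensures that $\psi$ is univalent on every open disk in $H$ of radius less than $\pi$; when $g\colon V\to U$ has infinite covering degree, $\psi$ is in fact biholomorphic on all of $H$.

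\emph{Koebe-type bound.} Put $\zeta_0=\phi(z)$ and $r=\log(|g(z)|/R)=\RE\zeta_0-\log R>0$, and set $\rho=\min(r,\pi)$. On the disk $\{\zeta:|\zeta-\zeta_0|<\rho\}\subset H$ the map $\psi$ is univalent with values in $\C\setminus\{0\}$. Normalising by $\tilde\psi(\xi)=\psi(\zeta_0+\rho\xi)/z$ gives a univalent function $\tilde\psi\colon\{|\xi|<1\}\to\C\setminus\{0\}$ with $\tilde\psi(0)=1$; Koebe's one-quarter theorem forces $|\tilde\psi'(0)|\leq 4$, hence $|\psi'(\zeta_0)|\leq 4|z|/\rho\leq 2\pi|z|/\rho$. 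Differentiating $g\circ\psi=\exp$ at $\zeta_0$ gives $g'(z)\psi'(\zeta_0)=g(z)$, so
\begin{equation}\notag
|g'(z)|\;=\;\frac{|g(z)|}{|\psi'(\zeta_0)|}\;\geq\;\frac{|g(z)|\,\rho}{2\pi|z|}.
\end{equation}

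\emph{Main obstacle.} When $r\leq\pi$ this is already the stated inequality. The remaining case $r>\pi$ is the delicate one: one must argue that after possibly enlarging $R$, every component $V$ of $\{|g|>R\}$ yields an infinite-degree covering $g\colon V\to U$, so that $\psi$ is globally univalent on $H$ and Koebe applies on the full disk $\{|\zeta-\zeta_0|<r\}$, producing the bound with $r$ in place of $\rho$. This step depends on finiteness of the number of logarithmic tracts (e.g.\ via the Denjoy--Carleman--Ahlfors theorem, justified here by the boundedness of critical and asymptotic values) and constitutes the principal technical point of the argument in~\cite{berg1}.
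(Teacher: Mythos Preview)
The paper does not prove this lemma; it is simply quoted from Bergweiler~\cite{berg1} and then invoked as a black box in the proof of Lemma~\ref{lemma}. There is therefore no argument in the paper against which to compare yours.

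Your sketch follows the standard logarithmic-lift approach and is correct through the Koebe estimate. The gap you isolate is genuine, but the resolution you propose is not. A component $V$ of $\{|g|>R\}$ that contains a pole of order $m$ yields an $m$-fold (hence finite-degree) covering $g\colon V\setminus\{\text{pole}\}\to U$ for \emph{every} choice of $R$, so enlarging $R$ cannot force all components to have infinite degree. What transcendence alone gives---without any appeal to the Denjoy--Carleman--Ahlfors theorem, which would require a finite-order hypothesis absent from the lemma---is that every \emph{unbounded} component has infinite degree, since a finite-degree unbounded component would force $\infty$ to be a pole of $g$. For such components your Koebe argument goes through on the full disk of radius $r$. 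The bounded components, i.e.\ punctured neighbourhoods of poles, remain; these have finite degree and must be treated by a separate estimate. Your sketch does not supply this, so as written the argument is incomplete for meromorphic $g$ with poles.
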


\begin{lemma}\label{lem8}~\cite{wang1}
If $f$ is a trancendental meromorphic function and $k$ be a positive integer, then, for every positive number $\epsilon$,
\begin{equation}\notag
(k-2)\overline{N}(r, f) + N(r, \frac{1}{f})\leq 2 \overline{N}(r,\frac{1}{f})+ N(r, \frac{1}{f^{(k)}}) + \epsilon T(r, f) + S(r,f).\\
\end{equation}
\end{lemma}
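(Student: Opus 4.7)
The inequality is a lower bound on $N(r,1/f^{(k)})$ in terms of the zero and pole distribution of $f$ itself, with an $\epsilon T(r,f)+S(r,f)$ residual. Its form strongly suggests that the proof combines the lemma on the logarithmic derivative (Lemma~\ref{lemm1}) with the Frank--Weissenborn-type estimate (Lemma~\ref{lem5}), together with a careful multiplicity analysis of an auxiliary logarithmic-derivative-type function.

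\emph{Plan.} First, I would form the auxiliary function $\phi = f^{(k)}/f$; by Lemma~\ref{lemm1}, $m(r,\phi) = S(r,f)$, so $T(r,\phi) = N(r,\phi) + S(r,f)$. Second, I would analyse the pole structure of $\phi$: at every pole of $f$ (of arbitrary multiplicity $p$), $\phi$ has a pole of order exactly $k$, while at a zero of $f$ of multiplicity $m$, $\phi$ has a pole of order at most $\min(m,k)$, possibly further reduced by any coincident zero of $f^{(k)}$. Third, I would bring in Lemma~\ref{lem5} in the form $N(r,1/f^{(k)}) \geq k\overline{N}(r,f) - N(r,f) - \epsilon T(r,f) - S(r,f)$, which gives the pole contribution on the correct side. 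Fourth, I would track the contributions of zeros of $f$ of orders $1$, $2$, and $\geq 3$ separately: a zero of $f$ of multiplicity $m\ge k+1$ forces a zero of $f^{(k)}$ of order $m-k$, which directly contributes $m-k$ to $N(r,1/f^{(k)})$, while low-order zeros are controlled by $\overline{N}(r,1/f)$. Collecting and rearranging these estimates, together with the identity $1/f=\phi/f^{(k)}$, should yield the stated inequality, with the coefficient $(k-2)$ arising after a careful trade between $\overline{N}(r,f)$ and $\overline{N}(r,1/f)$ facilitated by the $\epsilon T(r,f)$ slack.

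\emph{Main obstacle.} The principal technical difficulty is extracting precisely the coefficients $(k-2)$ on $\overline{N}(r,f)$ and $2$ on $\overline{N}(r,1/f)$, while keeping the residual error at the level $\epsilon T(r,f) + S(r,f)$. The factor $k-2$ rather than $k$ reflects a loss of two pole-orders per pole of $f$ which must be compensated by the factor $2$ on $\overline{N}(r, 1/f)$; justifying this exchange in the presence of the various subdominant cancellations in $N(r,\phi)$ at zeros of $f$ where $f^{(k)}$ also vanishes, and while simultaneously producing the term $N(r,1/f) - 2\overline{N}(r,1/f)$ (which counts zeros of $f$ of multiplicity $\ge 3$ with weight $m-2$), requires the sharpest form of the Frank--Weissenborn estimate available and constitutes the technical heart of the argument.
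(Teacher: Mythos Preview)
The paper does not supply a proof of this lemma at all: it is quoted verbatim from Wang~\cite{wang1} and used as a black box in the proof of Lemma~\ref{lemma}. Consequently there is no ``paper's own proof'' against which your proposal can be compared.

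As to the substance of your sketch: what you have written is a plan, not a proof. The ingredients you list (the logarithmic-derivative lemma applied to $\phi=f^{(k)}/f$, the Frank--Weissenborn inequality of Lemma~\ref{lem5}, and a multiplicity count at zeros of $f$) are indeed the standard tools in this circle of ideas, and they are plausibly the ones Wang uses. However, the place where the argument would actually have to be carried out---your ``Main obstacle'' paragraph---is left entirely unaddressed. In particular, Lemma~\ref{lem5} in the form you quote gives
\[
N\Bigl(r,\frac{1}{f^{(k)}}\Bigr)\ \ge\ k\,\overline{N}(r,f)-N(r,f)-\epsilon T(r,f)-S(r,f),
\]
whereas the target inequality, rewritten, requires
\[
N\Bigl(r,\frac{1}{f^{(k)}}\Bigr)\ \ge\ (k-2)\,\overline{N}(r,f)+\bigl(N(r,\tfrac{1}{f})-2\overline{N}(r,\tfrac{1}{f})\bigr)-\epsilon T(r,f)-S(r,f).
\]
Trading $2\overline{N}(r,f)-N(r,f)$ for $N(r,1/f)-2\overline{N}(r,1/f)$ is not a formal manipulation; it is the content of the lemma, and your proposal gives no mechanism for effecting it. Simply invoking ``the sharpest form of the Frank--Weissenborn estimate available'' does not constitute an argument. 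If you wish to prove the lemma rather than cite it, you would need to revisit Wang's original argument (which proceeds via a more refined analysis of the auxiliary function and its Nevanlinna quantities) and reproduce the computation in full.
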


The following lemma was proved by Bergweiler~\cite{berg} and Wang~\cite{wang} independently. Here we are giving another proof of this lemma.
\begin{lemma}\label{lemma}
Let $f(z)$ be a transcendental meromorphic function with finite order. Let $k, n$ be two positive integers such that $n\geq k+1$, then $(f^n)^{(k)}- b $ has infinitely many zeros for all $b\in\C\setminus \{0\}$.
\end{lemma}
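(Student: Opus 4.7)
My plan is to argue by contradiction: suppose $(f^n)^{(k)} - b$ has only finitely many zeros, whence $\overline{N}(r, 1/((f^n)^{(k)} - b)) = O(\log r) = S(r, f)$. Setting $F = f^n$, we note $S(r, F) = S(r, f)$, every zero of $F$ has multiplicity $\ge n$, and in particular $\overline{N}(r, F) = \overline{N}(r, f)$, $N(r, 1/F) \ge n\overline{N}(r, 1/F)$, and $T(r, F) = n T(r, f) + O(1)$.

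The first step is to apply Nevanlinna's second fundamental theorem to $F^{(k)}$ at the three targets $0$, $b$, $\infty$ and use the hypothesis to obtain
\[
T(r, F^{(k)}) \le \overline{N}(r, F) + \overline{N}(r, 1/F^{(k)}) + S(r, F).
\]
Pairing this with Lemma \ref{lemm2} and the matching lower bound $T(r, F^{(k)}) \ge T(r, F) - k\overline{N}(r, 1/F) - S(r, F)$ (derived from $m(r, F) \le m(r, F/F^{(k)}) + m(r, F^{(k)})$ together with the computation $T(r, F^{(k)}/F) = k\overline{N}(r, F) + k\overline{N}(r, 1/F) + S(r, F)$ via Lemma \ref{lemm1}) converts the SFT estimate into an upper bound on $T(r, F)$ in terms of $\overline{N}(r, F)$, $\overline{N}(r, 1/F^{(k)})$, and $\overline{N}(r, 1/F)$.

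The second step exploits the structural hypothesis: each zero of $F$ has order $\ge n \ge k+1$, so it is already a zero of $F^{(k)}$ of order $\ge n - k$, giving $\overline{N}(r, 1/F^{(k)}) \le \overline{N}(r, 1/F) + \overline{N}_0(r, 1/F^{(k)})$ where $\overline{N}_0$ collects the zeros of $F^{(k)}$ that are not zeros of $F$. I would bound $\overline{N}_0$ (and $N(r, 1/F^{(k)})$ itself) by combining Lemmas \ref{lem5} and \ref{lem8}; in particular Wang's inequality (Lemma \ref{lem8}) together with $N(r, 1/F) \ge n\overline{N}(r, 1/F)$ is what turns the multiplicity hypothesis on zeros of $F$ into a usable numerical gain.

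The principal difficulty is closing the argument in the borderline range where the purely Nevanlinna-averaging combination above is not sharp enough. This is where the finite-order hypothesis becomes essential. The plan is to apply Bergweiler's pointwise inequality (Lemma \ref{lem7}) to an auxiliary function $g$ built from $F$ and $F^{(k)}$ (after a preliminary translation so that $g(0) \ne \infty$), using Lemma \ref{lem6} to convert the finiteness of critical values of $g$, which follows from the hypothesis and the structural form of $F$, into a bound on the set of asymptotic values; integrating the resulting pointwise estimate over large circles then yields a growth bound that contradicts the Nevanlinna estimates of the first two steps. The main obstacle is exactly this last step: identifying the correct auxiliary $g$ and verifying the boundedness of its critical and asymptotic value set; once that is in place, the remaining arithmetic with counting functions is routine.
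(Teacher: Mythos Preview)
Your outline has the right architecture but the decisive step is genuinely missing, not just unspecified. The paper's Nevanlinna half is organized differently from yours: rather than applying the Second Fundamental Theorem to $F^{(k)}$ and then trying to bound $\overline{N}_0(r,1/F^{(k)})$ from above (which Lemmas~\ref{lem5} and~\ref{lem8} do \emph{not} give you---they furnish lower bounds on $N(r,1/f^{(k)})$), the paper derives a Milloux-type inequality
\[
T(r,f^n)\le \overline N(r,f^n)+N\Bigl(r,\tfrac{1}{f^n}\Bigr)+N\Bigl(r,\tfrac{1}{(f^n)^{(k)}-b}\Bigr)-N\Bigl(r,\tfrac{1}{(f^n)^{(k+1)}}\Bigr)+S(r,f^n),
\]
whose \emph{negative} term cancels against Lemma~\ref{lem8} applied with index $k+1$. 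That cancellation disposes of the case $k\ge 2$ directly; your route via an upper bound on $\overline N_0$ does not obviously close.

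The borderline is exactly $k=1$, and here your plan goes astray. The correct auxiliary function is the antiderivative $g(z)=f^n(z)-bz$, so that $g'(z)=(f^n)'(z)-b$; it is not ``built from $F$ and $F^{(k)}$'' in any more elaborate way. The paper then splits on whether $f$ has finitely or infinitely many zeros. With finitely many, $N(r,1/f^n)=S(r,f)$ and Lemma~\ref{lem5} alone suffices in the Milloux inequality. With infinitely many zeros $\{z_i\}$, the assumption that $g'$ has finitely many zeros gives finitely many critical values, Lemma~\ref{lem6} (this is where finite order enters) gives finitely many asymptotic values, and then Lemma~\ref{lem7} is applied \emph{pointwise at the $z_i$}, not integrated over circles: since $f(z_i)=0$ one has $g(z_i)=-bz_i$ and $g'(z_i)=-b$, so $|z_i g'(z_i)/g(z_i)|=1$ for all $i$, contradicting the inequality of Lemma~\ref{lem7} which forces this ratio to tend to $\infty$. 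Integrating the pointwise estimate over circles would wash out precisely this information and is the wrong instinct here.
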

\begin{proof}
Suppose on the contrary that $(f^n)^{(k)}$ assumes the value $b$ only finitely many times. Then
\begin{equation}\label{eq5}
N(r, \frac{1}{(f^n)^{(k)}-b}) = O(\log r)= S(r, f).
\end{equation}
By Nevanlinna's First Fundamental Theorem and Lemma \ref{lemm1} and Lemma \ref{lemm2}
\begin{align}
m(r, \frac{1}{f^n}) + &  m(r, \frac{1}{(f^n)^{(k)}-b})\notag\\
&\leq  m(r, \frac{(f^n)^{(k)}}{f^n}) + m(r, \frac{1}{(f^n)^{(k)}}) + m(r, \frac{1}{(f^n)^{(k)}-b})\notag\\
& \leq  m(r, \frac{1}{(f^n)^{(k)}} + \frac{1}{(f^n)^{(k)}-b}) + S(r, f^n)\notag\\
& \leq m(r, \frac{1}{(f^n)^{(k+1)}}) + m (r, \frac{(f^n)^{(k+1)}}{(f^n)^{(k)}} + \frac{(f^n)^{(k+1)}}{(f^n)^{(k)}-b}) + S(r, f^n)\notag\\
&\leq m(r, \frac{1}{(f^n)^{(k+1)}}) + S(r, f^n)\notag\\
&\leq T(r, (f^n)^{(k+1)}) - N(r, \frac{1}{(f^n)^{(k+1)}} + S(r, f)\notag\\
&\leq T(r, (f^n)^{(k)}) + \overline{N}(r, f^n) - N(r, \frac{1}{(f^n)^{(k+1)}}) + S(r, f^n).\label{eq6}
\end{align}
Together with Nevanlinna's First Fundamental Theorem this yields
\begin{align}
T(r, f^n)& \leq \overline{N}(r, f^n )  + N(r, \frac{1}{f^n} ) \notag \\
& + N(r, \frac{1}{(f^n)^{(k)}-b}) - N(r, \frac{1}{(f^n)^{(k+1)}}) + S(r, f^n). \label{eq7}
\end{align}
First, we consider the case  when $k\geq 2$, then By Lemma \ref{lem8}, for every $\epsilon >0$, we have
 \begin{align}
\overline{N}(r, f^n) &+ N (r, \frac{1}{f^n})\notag\\
& \leq (k-1)\overline{N}(r, \frac{1}{f^n}) + N(r, \frac{1}{f^n})\notag\\
&\leq 2\overline{N}(r, \frac{1}{f^n}) + N(r, \frac{1}{(f^n)^{(k+1)}}) + \epsilon T(r, f^n) + S(r, f^n). \label{eq8}
\end{align}
From \eqref{eq7} and \eqref{eq8}, and using the fact that zeros of $f^n$ has multiplicity at least 3 in this case, we get
\begin{align}
T(r, f^n)&\leq 2\overline{N}(r, \frac{1}{f^n}) +  N(r, \frac{1}{(f^n)^{(k)}-b}) + \epsilon T(r, f^n) + S(r, f^n)\notag\\
&\leq \frac{2}{3}N(r, \frac{1}{f^n}) + N(r, \frac{1}{(f^n)^{(k)}-b}) + \epsilon T(r, f^n) + S(r, f^n)\notag\\
&\leq \frac{2}{3} T(r, \frac{1}{f^n}) +  N(r, \frac{1}{(f^n)^{(k)}-b}) + \epsilon T(r, f^n) + S(r, f^n)\notag\\
&\leq (\frac{2}{3} + \epsilon)T(r, f^n) +  N(r, \frac{1}{(f^n)^{(k)}-b}) + S(r, f^n).\label{eq9}
\end{align}
Now, taking $\epsilon =\frac{1}{6}$, from \eqref{eq5} and \eqref{eq9},  we obtain
\begin{equation}\notag
T(r, f^n ) \leq 6 N(r, \frac{1}{(f^n)^{(k)}-b}) + S(r, f^n) = S(r, f^n),
\end{equation}
 which contradicts the fact that $f$ is a transcendental meromorphic function. Thus, Lemma \ref{lem8} is proved for the case $k\geq2$.\\

 Now, for the case $k=1$, we use the method of Fang ~\cite{Fang}.  We first consider that $f(z)$ has only finitely many zeros so is $f^n(z)$ has only finitely many zeros $i.e. N(r, \frac{1}{f^n}) = S(r, f^n) $. and invoke the Lemma \ref{lem5} and combine it with \eqref{eq7}, we have
 \begin{align}
 T(r, f^n) & \leq \overline{N}(r, f^n) + N(r, \frac{1}{f^n})\notag\\
 & + N(r, \frac{1}{(f^n)'-b})- N(r, \frac{1}{(f^n)''}) + S(r, f^n)\notag\\
 & \leq\frac{1}{2} N(r, f^n) + N (r, \frac{1}{f^n}) + N(r, \frac{1}{(f^n)'-b})\notag\\
 & + \frac{1}{4} T(r, f^n) + S(r, f^n)\notag\\
 & \leq \frac{3}{4} T(r, f^n) + N(r, \frac{1}{(f^n)'-b}) + S(r, f^n) \notag
  \end{align}
  Thus, we obtain
  \begin{equation}\notag
  T(r, f^n) = S (r, f^n).
  \end{equation}
Which is a contradiction, therefore the theorem is valid in this case.
Now, consider the case when $f(z)$ has infinitely many zeros $\{z_i\}, i=1, 2, 3, \ldots$. Define
\begin{center}
$g(z) = f^n(z) -bz ,$ then \
$g'(z) = (f^n)'(z) - b.$
\end{center}
If we show that $g'(z)$ has infinitely many zeros then we have done. Suppose $g'(z)$ has only finitely many zeros, so  $g(z)$ has only finitely many critical values and hence $g(z)$ has only finitely many asymptotic values. Without any loss of generality we may assume that $f(0)\neq \ity , $  thus by Lemma \ref{lem7}, we get
\begin{equation}\notag
|g'(z_i)|\geq \frac{|g(z_i)|}{2\pi|z_i|}\log\frac{|g(z_i)|}{R},
\end{equation}
this shows
\begin{equation}\notag
\frac{|z_ig'(z_i)|}{|g(z_i)|}\geq \frac{1}{2\pi}\log\frac{|g(z_i)|}{R},
\end{equation}
Since $\frac{1}{2\pi}\log\frac{|g(z_i)|}{R}\rightarrow \ity$ as $i\rightarrow \ity$, $\frac{|z_ig'(z_i)|}{|g(z_i)|}\rightarrow \ity$ as $i\rightarrow \ity.$ But $\frac{|z_ig'(z_i)|}{|g(z_i)|}\rightarrow 1$ as $i\rightarrow \ity,$ a contradiction. Hence we deduce that $(f^n)'(z)-b$ has infinitely many zeros. This completes the proof of theorem.\\

\end{proof}

\begin{lemma}\label{lem9} ~\cite{clunie}
 Let $f$ be an entire function. If the spherical derivative $f^{\#}$ is  bounded in $\C$, then the order of $f$ is at most 1.
\end{lemma}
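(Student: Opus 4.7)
\bigskip
\noindent\textbf{Proof proposal.} The plan is to proceed in two stages: first, to show that $f$ has finite order via the Ahlfors--Shimizu characteristic; second, to sharpen the order bound from $2$ down to $1$ by exploiting the absence of poles. For the preliminary bound I would invoke the Ahlfors--Shimizu formula
\begin{equation*}
T_0(r,f)=\int_0^r \frac{A(t)}{t}\,dt,\qquad A(t)=\frac{1}{\pi}\iint_{|z|<t}(f^{\#}(z))^2\,dA(z),
\end{equation*}
together with the standard identity $T(r,f)=T_0(r,f)+O(1)$. Since $f^{\#}(z)\leq M$ on $\C$, we have $A(t)\leq M^2 t^2$, and hence $T_0(r,f)\leq M^2 r^2/2$, so $f$ has order at most $2$.

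The main obstacle is sharpening the order bound from $2$ down to $1$. For this I would use the Hadamard factorization $f(z)=z^m e^{Q(z)}\Pi(z)$, in which $Q$ is a polynomial and $\Pi$ is a canonical product over the zeros of $f$, both of degree respectively genus at most $\lfloor\rho\rfloor$, where $\rho$ denotes the order of $f$. Suppose, aiming for a contradiction, that $\rho>1$, so that either $\deg Q=2$ or $\Pi$ has genus $2$. The idea is to locate a sequence $z_n\to\infty$ lying on the equator $\{|f|=1\}$; such a sequence exists outside every compact set, because $f$ is transcendental entire and therefore, by Picard's theorem, attains every value (apart from at most one exceptional one) infinitely often. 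At each such $z_n$ one has $f^{\#}(z_n)=|f'(z_n)|/2$, so any lower bound $|f'(z_n)|\to\infty$ extracted from the dominant term of the factorization produces the required contradiction with $f^{\#}\leq M$.

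The delicate point is to verify that the dominant term of the factorization really forces $|f'|$ to grow on the equator. In the pure exponential case $f=e^Q$ with $\deg Q=2$, this is essentially direct: $\{|f|=1\}=\{\RE Q=0\}$ is (generically) a conic, and $|f'|=|Q'|$ grows linearly along it, immediately contradicting the hypothesis. In the presence of a canonical product of genus $2$, the asymptotic analysis is considerably heavier, and I would proceed via the Phragm\'{e}n--Lindel\"{o}f principle, or via Wiman--Valiron estimates in annular neighbourhoods of the zero set of $f$, in order to pin down the behaviour of $f'$ on a suitable portion of the equator. I expect this sector-by-sector analysis to be the principal difficulty of the argument; as a fallback, one may instead invoke the translation invariance of the hypothesis---since $f^{\#}\leq M$ makes $\{f(\cdot+a):a\in\C\}$ a normal family by Marty's criterion---and combine a Zalcman-type rescaling as in Lemma~\ref{lem1} with Liouville-type arguments on the limit functions to bypass the Hadamard case-by-case analysis.
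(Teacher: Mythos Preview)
The paper does not supply its own proof of this lemma; it is simply quoted from Clunie--Hayman \cite{clunie}. So there is no in-paper argument to compare against, and I assess your proposal on its own terms.

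Your first stage is correct and standard: the Ahlfors--Shimizu estimate $A(t)\le M^{2}t^{2}$ yields $T(r,f)=O(r^{2})$, hence $\rho\le 2$.

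The second stage, however, contains a genuine gap. From $\rho>1$ you infer ``either $\deg Q=2$ or $\Pi$ has genus $2$,'' and this inference is false. Hadamard's theorem gives $\rho=\max\bigl(\deg Q,\lambda\bigr)$, where $\lambda$ is the \emph{convergence exponent} (equivalently, the order) of the canonical product---not its genus. A canonical product of genus~$1$ can have any order in $[1,2]$: if the zeros $a_{n}$ satisfy $\sum|a_{n}|^{-2}<\infty$ but $\sum|a_{n}|^{-\alpha}=\infty$ for every $\alpha<2$, then $\Pi$ has genus~$1$ and order~$2$. More plainly, any $\rho\in(1,2)$ forces $\lfloor\rho\rfloor=1$, so by your own bound $\deg Q\le 1$ and the genus of $\Pi$ is at most~$1$; your dichotomy therefore collapses precisely in the range you need to exclude. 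The only case you actually dispatch, $f=e^{Q}$ with $\deg Q=2$, does not cover the situation, and the sketch for a genus-$2$ product is both aimed at the wrong target and left undeveloped.

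Your fallbacks do not close the gap either. The observation that Marty's criterion makes $\{f(\cdot+a):a\in\C\}$ normal is indeed one classical entry point, but a ``Zalcman-type rescaling'' is the wrong tool: Zalcman's lemma applies to \emph{non}-normal families and produces a limit with $g^{\#}(0)=1$, whereas here the family is normal by hypothesis. The way this normality is actually exploited is different: one extracts a uniform bound of the form ``$|f(a)|\le 1\Rightarrow |f|\le C(M)$ on $D(a,\delta(M))$'' and combines it with Schottky/Harnack-type growth control across chains of such discs to obtain $\log M(r,f)=O(r)$ directly. If you wish to repair your argument, abandon the Hadamard case split and pursue this route, or simply cite \cite{clunie} as the paper does.
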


\section{Proof of Theorem \ref{thm}}

Since normality is a local property, we assume that $D=\Delta=\{z:|z|<1\}$.
Suppose  $\fr$ is not normal in $D$. Without loss of generality we assume that $\fr$ is not normal at the point $z_0$ in $\Delta$.  Then by Lemma \ref{lem1}, there exist
\begin{enumerate}
\item { a sequence of complex numbers $z_j \rightarrow z_0$, $|z_j|<r<1$},
\item { a sequence of functions $f_j\in \mathcal F$ and}
\item { a sequence of positive numbers $\rho_j \rightarrow 0$,}
\end{enumerate}
such that $g_j(\zeta) = \rho_{j}^{-\frac{k}{n}}f_j(z_j + \rho_j\zeta)\rightarrow g(\zeta)$ converges locally uniformly to a non-constant meromorphic  function $g(\zeta)$ in $\C$ with $g^{\#}(\zeta)\leq g(0)=1$. Moreover $g$ is of order at most two. We see that
\begin{equation}\label{eq3.1}
(g_{j}^{n})^{(k)}(\zeta)= (f_{j}^{n})^{(k)}(z_j + \rho_j\zeta) \rightarrow (g^n)^{(k)}(\zeta)
\end{equation}
converges locally uniformly with respect to the spherical metric. By Hurwitz's Theorem, $(g^n)^{(k)}\equiv 1$ or $(g^n)^{(k)}\neq 1.$ \\

 Let $(g^n)^{(k)}\equiv 1$, Then $g$ has no pole  this implies that $g$ is an entire function having no zero. Since $g^{\#}\leq 1$, we may put $g(\zeta)= \exp{(c\zeta +d)}$, where $c(\neq 0)$ and $d$ are constants. therefore we get
 \begin{equation}\notag
 (nc)^k\exp{(c\zeta + d)}\equiv 1,
 \end{equation}
which is not possible.\\

  Thus $(g^n)^{(k)}\neq 1$, which contradicts Lemma \ref{lem4} and Lemma \ref{lemma}. Thus $\fr $ is normal in $\mathcal D$. This completes the proof of theorem.\\

  \section{Proof of Theorem \ref{thm1}}

Since normality is a local property, we assume that $D=\Delta=\{z:|z|<1\}$.
Suppose  $\fr$ is not normal in $D$. Without loss of generality we assume that $\fr$ is not normal at the point $z_0$ in $\Delta$.  Then by Lemma \ref{lem1}, there exist
\begin{enumerate}
\item { a sequence of complex numbers $z_j \rightarrow z_0$, $|z_j|<r<1$},
\item { a sequence of functions $f_j\in \mathcal F$ and}
\item { a sequence of positive numbers $\rho_j \rightarrow 0$,}
\end{enumerate}
such that $g_j(\zeta) = \rho_{j}^{-\frac{k}{n}}f_j(z_j + \rho_j\zeta)\rightarrow g(\zeta)$ converges locally uniformly to a non-constant meromorphic  function $g(\zeta)$ in $\C$ with $g^{\#}(\zeta)\leq g(0)=1$. Moreover $g$ is of order at most two. We see that
\begin{equation}\label{eq4.1}
(g_{j}^{n})^{(k)}(\zeta)= (f_{j}^{n})^{(k)}(z_j + \rho_j\zeta) \rightarrow (g^n)^{(k)}(\zeta)
\end{equation}
converges locally uniformly with respect to the spherical metric.\\

Now we claim $(g_{j}^{n})^{(k)}-1$ has at most one zero IM. Suppose $(g_{j}^{n})^{(k)}-1$ has two distinct zeros $\zeta_0$ and $\zeta^*_0$ and choose $\delta > 0$ small enough so that $D(\zeta_0, \delta)\cap D(\zeta^*_0, \delta) = \emptyset$ and $(g_{j}^{n})^{(k)}-1$ has no other zeros in $D(\zeta_0,\delta)\cup D(\zeta_{0}^*,\delta)$, where $D(\zeta_0,\delta) = \{\zeta : |\zeta-\zeta_0|<\delta\}$ and $D(\zeta_{0}^*,\delta) = \{\zeta : |\zeta-\zeta_{0}^*|<\delta\}$. By Hurwitz's theorem, there exist two sequences $\{\zeta_{j}\}\subset D(\zeta_0,\delta), \{\zeta_{j}^*\}\subset D(\zeta_{0}^*,\delta)$ converging to $\zeta_{0}, \ \text{and}\  \zeta_{0}^*$ respectively and from \eqref{eq4.1}, for sufficiently large j, we have\\
\begin{equation}\notag
(f_j^n)^{(k)}(z_j+ \rho_j \zeta_j)- 1 = 0 \ \text{and} \ (f_j^n)^{(k)}(z_j+ \rho_j \zeta^*_j)- 1 = 0.
\end{equation}
Since $z_j\rightarrow 0$ and $\rho_j\rightarrow 0$, we have $z_j+\rho_j\zeta_j\in D(\zeta_0,\delta)$ and $z_j+\rho_j\zeta_{j}^*\in D(\zeta_{0}^*,\delta)$
 for sufficiently large $j$, so  $(f_j^n)^{(k)}- 1 $ has two distinct zeros, which contradicts the fact that $(f_j^n)^{(k)}- 1$ has at most one zero. But Lemma \ref{lemmali} and Lemma \ref{lemma} confirms the non existence of such non-constant  meromorphic function. This contradiction shows that $\fr$ is normal in $\Delta$ and this proves the theorem.\\

\end{document}